\newtheorem*{conj}{\sc Conjecture}
\newtheorem{proposition}{\sc Proposition}[section]
\newcommand*\pPq[5]{%
 \begingroup
 \begingroup\lccode`~=`,
   \lowercase{\endgroup\def~}{\pFcomma\mkern\pFqskip}%
 \mathcode`,=\string"8000
 {}_{#1}\phi_{#2}\left(\left.\genfrac..{0pt}{}{#3}{#4}\right|\,#5\right)%
 \endgroup
}
\mathchardef\pFcomma=\mathcode`, 
\def\BState{\State\hskip-\ALG@thistlm}
\def\downbar#1{
\setbox10=\hbox{$#1$}
            \dimen10=\ht10 \advance\dimen10 by 2.5pt
            \ifdim \dimen10<15pt 
               \advance\dimen10 by -0.5pt
               \dimen11=\dimen10
               \advance\dimen10 by 2.5pt
               \lower \dimen11
            \else \lower \ht10 \fi
            \hbox {\hskip 1.5pt \vrule height \dimen10 depth \dp10}}
\def\upbar#1{
\setbox10=\hbox{$#1$}
            \dimen10=\ht10 \advance\dimen10 by \dp10 \advance\dimen10 by 2.5pt
            \ifdim \dimen10<15pt 
                \advance\dimen10 by 2pt \fi
            \raise 2.5pt \hbox {\hskip -1.5pt \vrule height \dimen10}}
\begin{document}
\title[A counterexample to a conjecture of M. Ismail]{A counterexample to a conjecture of M. Ismail}
\author{K. Castillo}
\address{CMUC, Department of Mathematics, University of Coimbra, 3001-501 Coimbra, Portugal}
\email{ kenier@mat.uc.pt}
\author{D. Mbouna}
\address{University of Almer\'ia, Department of Mathematics, Almer\'ia, Spain}
\email{mbouna@ual.es}

\subjclass[2010]{33D45}
\date{\today}
\keywords{Askey-Wilson operator, continuous dual $q$-Hahn polynomials}
\maketitle
\begin{abstract}
In an earlier work [K. Castillo et al., J. Math. Anal. Appl. {\bf 514} (2022) 126358], we give positive answer to the first, and apparently more easy, part of a conjecture of M. Ismail concerning the characterization of the continuous $q$-Jacobi polynomials, Al-Salam-Chihara polynomials or special or limiting cases of them. In this note we present an example that disproves the second part of such a conjecture, and so this issue is definitively closed.
\end{abstract}
\section{Introduction}
The Askey-Wilson divided difference operator is defined by
\begin{align}\label{0.3}
\mathcal{D}_q\,f(x)=\frac{\breve{f}\big(q^{1/2} e^{i\theta}\big)
-\breve{f}\big(q^{-1/2} e^{i\theta}\big)}{\breve{e}\big(q^{1/2}e^{i\theta}\big)-\breve{e}\big(q^{-1/2} e^{i\theta}\big)},
\end{align}
where, for each polynomial $f$,  $\breve{f}(e^{i\theta})=f(\cos \theta)$ and $e(x)=x$ (see \cite[Section 12.1]{I05}). In \cite{CMP}, we give positive answer to the first part of the following Ismail's conjecture (see \cite[Conjecture 24.7.8]{I05}):
\begin{conj}
Let $(p_n)_{n\geq0}$ be a sequence of orthogonal polynomials and let $\pi$ be a polynomial which does not depend on $n$. If
\begin{align*}
\pi(x)\mathcal{D}_q\,p_n (x)=\sum_{k=-1}^1 c_{n, k} p_{n+k}(x),
\end{align*}
then $(p_n)_{n\geq0}$ are continuous $q$-Jacobi polynomials or Al-Salam-Chihara polynomials, or special or limiting cases of them. The same conclusion follows if 
\begin{align}\label{0.2Dq-general}
\pi(x)\mathcal{D}_q\,p_n (x)=\sum_{k=-r}^s c_{n, k} p_{n+k}(x),
\end{align}
for positive integers $r$ and $s$.
\end{conj}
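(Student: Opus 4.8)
The plan is to \emph{refute} the second assertion, so rather than a proof I would search for an explicit counterexample. The natural candidate — signalled already by the keywords — is the three–parameter family of continuous dual $q$-Hahn polynomials $\big(p_n(\,\cdot\,;a,b,c\mid q)\big)_{n\ge0}$, obtained from the Askey--Wilson polynomials by putting the fourth parameter equal to zero. For generic $(a,b,c)$ this family is genuinely three–parametric, whereas the Al-Salam--Chihara polynomials are exactly its two–parameter specialization $c=0$ and the continuous $q$-Jacobi polynomials form another two–parameter subfamily; hence for generic parameters the sequence is neither of the families in the conclusion nor a special or limiting case of them. It therefore suffices to produce one polynomial $\pi$ (independent of $n$) and positive integers $r,s$ for which \eqref{0.2Dq-general} holds for this family.

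A point to stress is that \eqref{0.2Dq-general} is \emph{pure} in $\mathcal D_q$: only $\mathcal D_q$ with a polynomial coefficient appears, and no averaging operator $\mathcal S_q$ is allowed. This is precisely why the first part can already exclude the full Askey--Wilson class — whose natural structure relations mix $\mathcal D_q$ and $\mathcal S_q$ — and it is this rigidity that the conjecture tries to exploit. To build the example I would start from the forward–shift property $\mathcal D_q\,p_n(\,\cdot\,;a,b,c\mid q)=\lambda_n\,p_{n-1}(\,\cdot\,;aq^{1/2},bq^{1/2},cq^{1/2}\mid q)$, which is itself pure in $\mathcal D_q$. The problem then becomes purely algebraic: find a polynomial $\pi$, independent of $n$, such that the parameter–shifted polynomial $p_{n-1}(\,\cdot\,;aq^{1/2},bq^{1/2},cq^{1/2}\mid q)$, multiplied by $\pi$, expands in finitely many of the original $p_{n+k}(\,\cdot\,;a,b,c\mid q)$. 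A degree count fixes the top index, $s=\deg\pi-1$, and I expect $\deg\pi$ to be small (two or three), producing a genuine four– or five–term band, strictly wider than the band $\{-1,0,1\}$ of the first part.

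The heart of the matter, and the main obstacle, is to show that this connection is \emph{finitely} banded, i.e. that the coefficients $c_{n,k}$ vanish for $k<-r$. The usual Christoffel shortcut fails here, because the quotient of the two weights, $\prod_{t\in\{a,b,c\}}(te^{i\theta},te^{-i\theta};q)_\infty/(tq^{1/2}e^{i\theta},tq^{1/2}e^{-i\theta};q)_\infty$, is not a polynomial in $x=\cos\theta$: the shift $t\mapsto tq^{1/2}$ is by a half–integer power. I would therefore compute the coefficients directly, either by evaluating the relevant $q$-hypergeometric connection formula between the two three–parameter families, or by extracting $c_{n,k}=\langle \pi\,\mathcal D_q p_n,\,p_{n+k}\rangle/\langle p_{n+k},p_{n+k}\rangle$ and showing, through the Pearson equation satisfied by the continuous dual $q$-Hahn weight together with the product rule relating $\mathcal D_q$ and $\mathcal S_q$, that this inner product vanishes once $n+k<n-r$. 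Pinning down the explicit $\pi$ — its degree and its zeros, dictated by $a,b,c$ and $q$ — and checking the cancellation of the low–order terms is the only genuinely delicate computation.

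To finish I would certify that the example escapes the conclusion by comparing three–term recurrence coefficients: a generic continuous dual $q$-Hahn sequence depends on three independent parameters, while Al-Salam--Chihara and continuous $q$-Jacobi (and their special or limiting cases) depend on fewer, so no specialization or limit reproduces it. Consistency is then automatic: by the first part proved in \cite{CMP}, any sequence obeying \eqref{0.2Dq-general} with $r=s=1$ must lie in the list, so our band cannot collapse to $\{-1,0,1\}$. Thus exhibiting a single continuous dual $q$-Hahn family together with a four–term (or wider) pure-$\mathcal D_q$ relation already contradicts the second part, and the issue is closed.
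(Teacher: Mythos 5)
Your instinct to refute the second assertion with a continuous dual $q$-Hahn counterexample points at the right family, but your plan breaks precisely at the step you set aside as ``the only genuinely delicate computation,'' and it breaks for a structural reason, not a technical one. For your candidate — generic $p_n(\cdot;a,b,c\,|\,q)$ in the \emph{same base} $q$ as the operator — every relation of type \eqref{0.2Dq-general} must factor through the forward shift you quote, since $\pi\,\mathcal{D}_q p_n=\pi\lambda_n\tilde p_{n-1}$ with $\tilde p_{n-1}=p_{n-1}(\cdot;aq^{1/2},bq^{1/2},cq^{1/2}\,|\,q)$. Now suppose the expansion of $\pi\tilde p_{n-1}$ in the $p_j$'s had a band bounded below by $n-r$ for every $n$. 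Computing the coefficient of $p_0$ by orthogonality, this says $\pi w/\tilde w$ is orthogonal in $L^2(\tilde w)$ to $\tilde p_m$ for all $m\geq r$ (here $w,\tilde w$ are the two weights, both supported on $[-1,1]$), and by density of polynomials it would have to coincide on $[-1,1]$ with a polynomial of degree at most $r-1$. But
\begin{align*}
\frac{w(x)}{\tilde w(x)}=\prod_{t\in\{a,b,c\}}\;\prod_{k\geq0}\frac{1-2tq^{k+1/2}x+t^{2}q^{2k+1}}{1-2tq^{k}x+t^{2}q^{2k}}
\end{align*}
extends to a meromorphic function of $x$ with infinitely many generically distinct poles at $x=\tfrac12(tq^{k}+t^{-1}q^{-k})$, $k\geq0$, of which a fixed polynomial $\pi$ can cancel only finitely many; by analytic continuation no such identity is possible. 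So for generic $(a,b,c)$ the band is \emph{infinite}: your family satisfies no relation \eqref{0.2Dq-general} whatsoever, and it is not a counterexample. This is exactly why the half-integer parameter shift blocks the Christoffel argument — you noticed the obstruction but treated it as an inconvenience to be computed around, when in fact it is fatal.

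What actually works, and what the paper does, is to break the symmetry between the base of the polynomials and the base of the operator: the counterexample is $P_n=H_n(\cdot;1,-1,q^{1/4}\,|\,q^{1/2})$, continuous dual $q^{1/2}$-Hahn with those special parameters, acted on by $\mathcal{D}_q$ (base $q$). Since $\mathcal{D}_q$ shifts $q^{s}\mapsto q^{s\pm1/2}$, i.e.\ by \emph{whole} powers of the base $q^{1/2}$, the half-integer obstruction above disappears, and the choice $a=1$, $b=-1$, $c=q^{1/4}$ produces the needed telescoping. The proof is then elementary: a simultaneous induction on $n$, using the recurrence \eqref{TTRR} and the product rules \eqref{def-Dx-fg}--\eqref{def-Sx-fg}, establishes the coupled pair \eqref{main-eq1}--\eqref{main-eq2}, the second of which is \eqref{0.2Dq-general} with $\pi=\texttt{U}_2$, $r=2$, $s=1$; no connection-coefficient or Pearson-equation machinery is required. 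Two smaller corrections: continuous $q$-Jacobi polynomials are not a subfamily of continuous dual $q$-Hahn (they need all four Askey--Wilson parameters nonzero), and the result of \cite{CMP} adds Chebyshev-type special cases to the list, so your ``parameter count'' certification should instead argue, as the paper does, that the specific $P_n$ lies outside all special and limiting cases.
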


The second part of this conjecture is certainly a much more complex problem than the first one.  However, after much manipulation of similar structural relations in a number of recent works, we found the second part of the conjecture less and less convincing. Now in the next section we propose a counterexample.
\section{Counterexample}

Throughout this section we assume that $0<q<1$. Set $x(s)=(q^{s}+q^{-s})/2$. Taking $e^{i\theta}=q^s$ in \eqref{0.3}, $\mathcal{D}_q$ reads
\begin{equation*}
\mathcal{D}_q f(x(s))= \frac{f\big(x(s+\frac{1}{2})\big)-f\big(x(s-\frac{1}{2})\big)}{x(s+\frac{1}{2})-x(s-\frac{1}{2})}.
\end{equation*}
Define
\begin{align*}
\mathcal{S}_q f(x(s))=\frac{f\big(x(s+\frac{1}{2})\big)+f\big(x(s-\frac{1}{2})\big)}{2},
\end{align*}
and
\begin{align*}
\alpha_n&= \frac{q^{n/2}+q^{-n/2}}{2},\quad  \gamma_n=\frac{q^{n/2}-q^{-n/2}}{q^{1/2}-q^{-1/2}} \quad (n=0,1,\dots),
\end{align*}
and $\alpha=\alpha_1$.
Recall that
\begin{align}
\mathcal{D}_q \big(fg\big)&= \big(\mathcal{D}_q f\big)\big(\mathcal{S}_q g\big)+\big(\mathcal{S}_q f\big)\big(\mathcal{D}_q g\big), \label{def-Dx-fg} \\[7pt]
\mathcal{S}_q \big( fg\big)&=\big(\mathcal{D}_q f\big) \big(\mathcal{D}_q g\big)\texttt{U}_2  +\big(\mathcal{S}_q f\big) \big(\mathcal{S}_q g\big), \label{def-Sx-fg}
\end{align}
where $\texttt{U}_2(x)=(\alpha^2 -1)(x^2-1)$. All these properties and definitions, even the notation, can be found, for instance, in  \cite{CMP1}. The Askey-Wilson polynomials are defined by
$$
p_n(x; a, b, c, d\,|\, q)=a^{-n}\,(ab, ac, ad; q)_n\, \pPq{4}{3}{q^{-n}, a b c b d q^{n-1}, a e^{i \theta},  a e^{-i \theta}}{ab, ac, ad}{q, q},
$$
where $x=\cos \theta$. If we take $a=q^{1/2\alpha+1/4}$, $b=q^{1/2\alpha+3/4}$, $c=-a$, and $d=-b$, we get the continuous $q$-Jacobi polynomials. If we take $d=0$, we get the continuous dual $q$-Hahn polynomials. If we take $c=d=0$, we get the Al-Salam-Chihara polynomials. The sequence of monic continuous dual $q$-Hahn polynomials, $(H_n(x;a,b|q))_{n\geq 0}$, satisfies
\begin{align}\label{H}
xH_n(x;a,b,c|q)=H_{n+1}(x;a,b,c|q)+a_nH_{n}(x;a,b,c|q)+b_nH_{n-1}(x;a,b,c|q)\;,
\end{align}
where $H_{-n-1}(\cdot;a,b,c|q)=0$ and
\begin{align*}
a_n&=\frac12(a+a^{-1}-a(1-q^n)(1-bcq^{n-1}) -a^{-1}(1-abq^n)(1-acq^n)),\\[7pt]
 b_n&=\frac14(1-abq^{n-1})(1-acq^{n-1})(1-bcq^{n-1})(1-q^{n}).
\end{align*}
Define $P_n=H_{n}(\cdot ;1,-1,q^{1/4}|q^{1/2})$. Clearly, $P_n$ is not a continuous $q$-Jacobi polynomial or Al-Salam-Chihara polynomial or, much less, special or limiting cases of them. These polynomials satisfy, among other relations, a relation of type \eqref{0.2Dq-general} with $r=2$ and $s=1$.

\begin{proposition}\label{P}
Let $P_n(x)=H_{n}(x;1,-1,q^{1/4}|q^{1/2})$. The sequence $(P_n)_{n\geq 0}$ satisfies the following relations: 
\begin{align}
\label{main-eq1}\mathcal{S}_qP_n(x)&=\alpha_nP_n(x)+c_nP_{n-1}(x),\\[7pt]
\label{main-eq2} \texttt{U}_2(x) \mathcal{D}_qP_n(x)&=(\alpha^2-1)\gamma_nP_{n+1}(x)+\big(c_{n+1}-\alpha c_n +(1-\alpha)\alpha_n B_n\big)P_n(x)\\[7pt]
&\quad +\big((B_n-\alpha B_{n-1})c_n +(1-\alpha^2)\gamma_n C_n \big)P_{n-1}(x)\nonumber\\[7pt]
&\quad +(c_{n-1}C_n-\alpha c_nC_{n-1})P_{n-2}(x),\nonumber
\end{align}
where
\begin{align*}
B_n&=\frac{1}{2}\Big((1+q^{-1/2})q^{n/2} +1-q^{-1/2}\Big)q^{(2n+1)/4},\\[7pt]
C_n&=\frac{1}{4}(1+q^{(n-1)/2})(1-q^{n/2})(1-q^{n-1/2}),\\[7pt]
c_n&=C_n\, q^{-(2n-1)/4}.
\end{align*}
\end{proposition}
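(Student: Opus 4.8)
The plan is to make the entire proposition rest on the single identity \eqref{main-eq1}, deriving \eqref{main-eq2} from it mechanically. Write $S_n:=\mathcal{S}_qP_n$. First I would apply the Leibniz rule \eqref{def-Sx-fg} with $f=e$ (the identity $e(x)=x$) and $g=P_n$; since a one-line computation from the definition gives $\mathcal{D}_qe=1$ and $\mathcal{S}_qe=\alpha\,e$, this yields $\mathcal{S}_q(xP_n)=\texttt{U}_2(x)\,\mathcal{D}_qP_n+\alpha\,x\,S_n$. Comparing with the value of $\mathcal{S}_q(xP_n)$ obtained by applying $\mathcal{S}_q$ to the recurrence \eqref{H} gives the master identity
\begin{align*}
\texttt{U}_2(x)\,\mathcal{D}_qP_n(x)=S_{n+1}+a_nS_n+b_nS_{n-1}-\alpha\,x\,S_n.
\end{align*}

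Next I would substitute \eqref{main-eq1} into this identity and expand $x\,S_n$ through \eqref{H}, so that the right-hand side collapses to a combination of $P_{n+1},P_n,P_{n-1},P_{n-2}$. To identify the coefficients with those in \eqref{main-eq2} I would record two elementary identities, $\alpha_{n+1}-\alpha\alpha_n=(\alpha^2-1)\gamma_n$ and $\alpha_{n-1}-\alpha\alpha_n=(1-\alpha^2)\gamma_n$, together with the two equalities $a_n=B_n$ and $b_n=C_n$, which are short direct checks on the explicit $a_n,b_n$ for the present parameters $(a,b,c)=(1,-1,q^{1/4})$ and base $q^{1/2}$. With these the coefficient of $P_{n+1}$ is $(\alpha^2-1)\gamma_n$, that of $P_n$ is $c_{n+1}-\alpha c_n+(1-\alpha)\alpha_nB_n$, that of $P_{n-1}$ is $(B_n-\alpha B_{n-1})c_n+(1-\alpha^2)\gamma_nC_n$, and that of $P_{n-2}$ is $c_{n-1}C_n-\alpha c_nC_{n-1}$, which is exactly \eqref{main-eq2}.

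Everything thus reduces to \eqref{main-eq1}, and for this my approach is spectral. Normalising the Chebyshev polynomials by $T_m(x(s))=\tfrac12(q^{ms}+q^{-ms})$ (so $T_0=1$, $T_1=e$), a direct computation from the definition of $\mathcal{S}_q$ shows that they are eigenfunctions, $\mathcal{S}_qT_m=\alpha_mT_m$; in particular the leading term of $\mathcal{S}_qP_n$ is $\alpha_nx^n$, which already accounts for the $\alpha_nP_n$ in \eqref{main-eq1}. Expanding $P_n=\sum_{m=0}^{n}\kappa_{n,m}T_m$ one gets $\mathcal{S}_qP_n=\sum_{m=0}^{n}\alpha_m\kappa_{n,m}T_m$, so that \eqref{main-eq1} is equivalent to the system $\kappa_{n,m}(\alpha_m-\alpha_n)=c_n\kappa_{n-1,m}$ for $0\le m\le n$. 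The cases $m=n$ and $m=n-1$ are automatic, the latter fixing $c_n$; the real content is that a single constant $c_n$ serves for all $m\le n-2$, i.e. that the ratio $\kappa_{n,m}(\alpha_m-\alpha_n)/\kappa_{n-1,m}$ does not depend on $m$.

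That $m$-independence is the main obstacle, and it is exactly where the special parameters are needed. I would compute the Fourier--Chebyshev coefficients $\kappa_{n,m}$ in closed form from the ${}_3\phi_2$ representation specialised to $(1,-1,q^{1/4})$ and base $q^{1/2}$; for these values they reduce to a product of $q$-shifted factorials in which the $n$-dependence separates from the $m$-dependence. One then checks that $\kappa_{n,m}/\kappa_{n-1,m}$ differs from an $m$-free factor precisely by $(\alpha_m-\alpha_n)^{-1}$, which at once gives the two-term collapse and the value $c_n=C_n\,q^{-(2n-1)/4}$ (in agreement with the base case $c_1=a_0(\alpha-1)$ computed directly). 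For generic parameters this separation fails and $\mathcal{S}_qP_n$ is a full expansion, so that \eqref{main-eq1}, and with it the band-width $r=2$, $s=1$ in \eqref{main-eq2} in place of the classical $r=s=1$, is genuinely special to this family. As a more self-contained alternative one may bypass the explicit $\kappa_{n,m}$ by establishing that $\mathcal{S}_q$ has an adjoint, with respect to the orthogonality measure of $(P_n)$, that raises degree by at most one; then $\langle\mathcal{S}_qP_n,P_k\rangle=0$ for $k\le n-2$ forces the two-term form, but verifying this adjoint property for the mismatched bases $q$ and $q^{1/2}$ is the same difficulty in another guise.
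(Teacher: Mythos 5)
Your reduction of \eqref{main-eq2} to \eqref{main-eq1} is correct, and it is in fact a cleaner organization than the paper's own argument. Your ``master identity'' $\texttt{U}_2(x)\,\mathcal{D}_qP_n=S_{n+1}+B_nS_n+C_nS_{n-1}-\alpha\,x\,S_n$ is exactly the paper's \eqref{aux1}, obtained there in the same way (apply \eqref{def-Sx-fg} to the recurrence \eqref{TTRR}, using $\mathcal{D}_qe=1$, $\mathcal{S}_qe=\alpha e$); substituting \eqref{main-eq1} at levels $n-1$, $n$, $n+1$, expanding $xS_n$ by \eqref{TTRR}, and using $\alpha_{n+1}-\alpha\alpha_n=(\alpha^2-1)\gamma_n$ and $\alpha_{n-1}-\alpha\alpha_n=(1-\alpha^2)\gamma_n$ does reproduce the four coefficients of \eqref{main-eq2} exactly. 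The paper instead runs a joint induction in which the two identities feed each other: \eqref{aux1} advances \eqref{main-eq1} using \eqref{main-eq2} at level $k$, and \eqref{def-Dx-fg} applied to \eqref{TTRR} (the paper's \eqref{aux3}) advances \eqref{main-eq2}, at the cost of the coefficient bookkeeping $d_{k,1},\dots,d_{k,6}$ checked by software. Your observation that \eqref{main-eq2} follows from \eqref{main-eq1} alone, for all $n$ at once, would eliminate that second half entirely.

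The gap is in the only part that then carries the substance: \eqref{main-eq1} itself. Your spectral reduction --- $\mathcal{S}_qT_m=\alpha_mT_m$, so \eqref{main-eq1} is equivalent to $\kappa_{n,m}(\alpha_m-\alpha_n)=c_n\kappa_{n-1,m}$ for $m\le n-2$ --- is correct, but it is only a restatement: iterating it says precisely that $\kappa_{n,m}=\kappa_{m,m}\prod_{j=m+1}^{n}c_j/(\alpha_m-\alpha_j)$, and this product structure is neither more nor less than \eqref{main-eq1}. At the decisive moment you write that you ``would compute'' the coefficients $\kappa_{n,m}$ in closed form, that the $n$- and $m$-dependence ``reduce'' to a separated product, and that ``one then checks'' the ratio identity; none of this is exhibited or verified. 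Converting the ${}_3\phi_2$ representation, which lives in the basis $(ae^{i\theta},ae^{-i\theta};q)_k$, into the Chebyshev basis is a genuine $q$-hypergeometric computation, not a formality, and whether the asserted separation occurs for the parameters $(1,-1,q^{1/4})$ with base $q^{1/2}$ is exactly the content of the proposition; your adjoint alternative you yourself concede is ``the same difficulty in another guise''. Note also that you cannot close the loop with the master identity alone: solving it for $S_{k+1}$ requires knowing $\texttt{U}_2(x)\mathcal{D}_qP_k$, i.e.\ \eqref{main-eq2} at $n=k$, which in your scheme requires \eqref{main-eq1} at $n=k+1$ --- circular. This is precisely why the paper couples the two identities in one induction, with \eqref{aux3} supplying the independent second equation. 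The shortest repair is to keep your derivation of \eqref{main-eq2} and prove \eqref{main-eq1} by that same coupled induction (your Part 1 then replaces the paper's verification of the $d_{k,i}$), or else to actually produce the closed form for $\kappa_{n,m}$ and carry out the check you defer.
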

\begin{proof}
\eqref{H} makes it obvious that
\begin{align}
x P_n(x)=P_{n+1}(x)+B_nP_n(x)+C_nP_{n-1}(x) \quad (n=0,1,\dots).\label{TTRR}
\end{align}
The proof is by complete mathematical induction on $n$. Note that $c_0=0$ and $\alpha_0=1$, and so
$$
c_1-\alpha c_0+(1-\alpha)\alpha_0 B_0=0.
$$
Hence, for $n=0$ we have
\begin{align*}
\mathcal{S}_qP_0&=1=\alpha_0P_0 +c_0P_{-1}=1,\\[7pt]
\texttt{U}_2(x)\mathcal{D}_qP_0(x)&=0=(\alpha^2-1)\gamma_0P_1 +0P_0=0.
\end{align*}
Assuming \eqref{main-eq1} and \eqref{main-eq2} hold for all $n\leq k$, we will prove it for $n=k + 1$. Set
\begin{align*}
c_{n, 1}&=(\alpha^2-1)\gamma_n,\\[7pt]
 c_{n, 2}&=c_{n+1}-\alpha c_n +(1-\alpha)\alpha_n B_n,\\[7pt]
c_{n, 3}&= (B_n-\alpha B_{n-1})c_n +(1-\alpha^2)\gamma_nC_n, \\[7pt]
c_{n, 4}&= c_{n-1}C_{n} -\alpha c_nC_{n-1}.
\end{align*}
Now \eqref{main-eq2} reads as 
\begin{align}\label{aux2}
\texttt{U}_2(x) \mathcal{D}_qP_n(x)=c_{n, 1}P_{n+1}(x)+c_{n, 2}P_n(x)+c_{n, 3}P_{n-1}(x)+c_{n, 4}P_{n-2}(x).
\end{align}
Applying $\mathcal{S}_q$ to \eqref{TTRR}, and using \eqref{def-Sx-fg}, we get
\begin{align}
\nonumber \mathcal{S}_qP_{n+1}(x)&=\texttt{U}_2(x)\mathcal{D}_q\,x \mathcal{D}_qP_n(x)+\mathcal{S}_q\,x \mathcal{S}_qP_{n}(x)-B_n\mathcal{S}_qP_{n}(x)+C_n \mathcal{S}_qP_{n-1}(x)\\[7pt]
\label{aux1}&=\texttt{U}_2(x)\mathcal{D}_qP_n(x)- B_n \mathcal{S}_qP_n(x)-C_n\mathcal{S}_qP_{n-1}(x) + \alpha x \mathcal{S}_qP_n(x). 
\end{align}
From \eqref{aux1}, and using \eqref{aux2} for $n=k$ and \eqref{main-eq1} for $n=k-1$ and $n=k$, we obtain
\begin{align*}
\mathcal{S}_qP_{k+1}(x)&= c_{k, 1}P_{k+1}(x)+c_{k, 2}P_k(x)+c_{k, 3}P_{k-1}(x)+c_{k, 4}P_{k-2}(x)\\[7pt]
&\quad -B_k(\alpha_k P_k(x)+c_k P_{k-1}(x))-C_k (\alpha_{k-1} P_{k-1}(x)+c_{k-1} P_{k-2}(x))\\[7pt]
&\quad +\alpha x (\alpha_k P_k(x)+c_k P_{k-1}(x)).
\end{align*}
From \eqref{TTRR} we have 
\begin{align*}
\alpha x (\alpha_k P_k(x)+c_k P_{k-1}(x))&=\alpha \alpha_k(P_{k+1}(x)+B_k P_k(x)+C_kP_{k-1}(x))\\[7pt]
&\quad +\alpha c_k (P_k(x)+B_{k-1} P_{k-1}(x)+C_{k-1} P_{k-2}(x)).
\end{align*}
We leave to the reader the verification that 
$$
c_{k,3} +(\alpha \alpha_k-\alpha_{k-1}) C_k+(\alpha B_{k-1}-B_k)c_k=c_{k,4} +\alpha c_k C_{k-1}-c_{k-1}C_k=0,
$$
and 
$$
\alpha_{k+1}=c_{k, 1}+\alpha\, \alpha_k, \quad c_{k+1}=c_{k, 2}+\alpha c_k+(\alpha-1)\alpha_k B_k.
$$
We thus get
\begin{align*}
\mathcal{S}_qP_{k+1}(x)&=(c_{k, 1}+\alpha\, \alpha_k)P_{k+1}(x)+(c_{k, 2}+\alpha c_k+(\alpha-1)\alpha_k B_k)P_k(x)\\[7pt]
&\quad + (c_{k,3} +(\alpha \alpha_k-\alpha_{k-1}) C_k+(\alpha B_{k-1}-B_k)c_k) P_{k-1}(x)\\[7pt]
&\quad +(c_{k,4} +\alpha c_k C_{k-1}-c_{k-1}C_k)P_{k-2}(x)\\[7pt]
&=\alpha_{k+1}P_{k+1}(x)+c_{k+1}P_k(x),
\end{align*}
and \eqref{main-eq1} holds for $n=k+1$. Applying now $\mathcal{D}_q$ to \eqref{TTRR}, and using \eqref{def-Dx-fg}, we get
\begin{align}\label{aux3}
\mathcal{D}_q P_{k+1}(x)=\mathcal{S}_qP_{k}(x)+(\alpha x-B_k)\mathcal{D}_qP_k(x)-C_k\mathcal{D}_q P_{k-1}(x).
\end{align}
From \eqref{TTRR} we have 
\begin{align*}
\texttt{U}_2(x) P_k(x)&=(\alpha^2-1) P_{k+2}(x)+(\alpha^2-1)(B_k+B_{k+1})P_{k+1}(x)\\[7pt]
&\quad +(\alpha^2-1) (B_k^2+C_{k+1}+C_k-1)P_k(x)\\[7pt]
& \quad +(\alpha^2-1) C_k (B_k+B_{k-1})P_{k-1}(x)+(\alpha^2-1) C_kC_{k-1}P_{k-2}(x).
\end{align*}
Hence, multiplying \eqref{aux3} by $\texttt{U}_2$  and using \eqref{main-eq1} for $n=k$ and \eqref{aux2} for $n=k-1$ and $n=k$, we get
\begin{align*}
\texttt{U}_2(x)\mathcal{D}_q P_{k+1}(x)&= d_{k, 1}P_{k+2}(x)+d_{k, 2}P_{k+1}(x)+d_{k, 3}P_{k}(x)+d_{k, 4}P_{k-1}(x)\\[7pt]
&\quad +d_{k, 5}P_{k-2}(x)+d_{k, 6}P_{k-3}(x),
\end{align*}
where
\begin{align*}
d_{k, 1}&= (\alpha^2-1)\alpha_k +\alpha c_{k, 1},\\[7pt]
 d_{k, 2}&=(\alpha^2-1)(c_k+\alpha_k (B_k+B_{k+1}))+\alpha c_{k, 2}-(B_k-\alpha B_{k+1})c_{k, 1},\\[7pt]
 d_{k, 3}&=(\alpha^2-1)((B_k+B_{k-1})c_k +\alpha_k(B_{k} ^2 +C_{k}+C_{k+1}-1)) +\alpha c_{k, 1} C_{k+1}\\[7pt]
&\quad -c_{k-1,1}C_k +(\alpha-1)c_{k, 2}B_n +\alpha c_{k, 3},\\[7pt]
d_{k, 4}&=(\alpha^2-1)((B_k+B_{k-1})\alpha_k C_k+(C_k+B_{k-1} ^2 +C_{k-1}-1)c_k)\\[7pt]
&\quad -(c_{k-1,2}-\alpha c_{k, 2})C_k-(B_k-\alpha B_{k-1})c_{k,3}+\alpha c_{k, 4},\\[7pt]
d_{k, 5}&=(\alpha^2-1)C_{k-1}(\alpha_k C_k +c_k(B_{k-1}+B_{k-2}))+\alpha c_{k, 3}C_{k-1}-c_{k-1,3}C_k\\[7pt]
&\quad -(B_k-\alpha B_{k-2})c_{k, 4},\\[7pt]
d_{k, 6}&= (\alpha^2-1)c_k C_{k-1}C_{k-2}+\alpha c_{k, 4}C_{k-2}-c_{k-1,4}C_k.
\end{align*}
Finally, the reader should satisfy himself that $d_{k, 1}=c_{k+1,1}$, $d_{k, 2}=c_{k+1,2}$, $d_{k, 3}=c_{k+1,3}$, $d_{k, 4}=c_{k+1,4}$, $d_{k, 5}=0$, and  $d_{k, 6}=0$, and \eqref{main-eq2} holds for $n=k+1$, but this is easily verified using any mathematical software. That completes the inductive step, and hence the proof.
\end{proof}

The previous proof by induction was elaborated after having found the sequence $(P_n)_{n\geq 0}$ by a method similar to the one used in \cite{CMP}, which involves  a high degree of technical complexity. However, as Galileo reputedly said: ``All truths are easy to understand once they are discovered. The point is to discover them". In this sense, and view of Proposition \ref{P}, an interesting open problem is to characterize the sequence of orthogonal polynomials $(p_n)_{n\geq 0}$ such that $\mathcal{S}_q p_n$ can be written as a linear combination of $p_n$ and $p_{n-1}$. 

\section*{Acknowledgements}
This work is supported by the Centre for Mathematics of the University of Coimbra, funded by the Portuguese Government through FCT/ MCTES. DM is partially supported by ERDF and Consejer\'ia de Econom\'ia, Conocimiento, Empresas y Universidad de la Junta de Andaluc\'ia (grant UAL18-FQM-B025-A) and by the Research Group FQM-0229 (belonging to Campus of International Excellence CEIMAR).

 \end{document}